\newtheorem{thr}{Theorem}[section]
\newtheorem{lem}[thr]{Lemma}
\newtheorem{conj}[thr]{Conjecture}
\theoremstyle{definition}
\newtheorem{cor}[thr]{Corollary}
\newtheorem{cla}[]{Claim}
\newcommand{\m}{\mathrm{min}}
\newcommand{\cM}{\mathcal{M}}
\DeclareMathOperator{\cm}{cm}
\title{Connected matching in graphs with independence number two}
\author[1]{Rong Chen\footnote{Email: rongchen@fzu.edu.cn (R. Chen).} }
\author[1]{Zijian Deng\footnote{Email: zj1329205716@163.com.(corresponding author).} }
\affil{Center for Discrete Mathematics,
Fuzhou University, China}
\date{}
\begin{document}

\maketitle

\begin{abstract}
	
A matching $M$ in a graph $G$ is {\em connected} if $G$ has an edge linking each pair of edges in $M$. The problem to find large connected matchings in graphs $G$ with $\alpha(G)=2$ is closely related to Hadwiger's conjecture for graphs with independence number 2. 
The problem of finding a large connected matching in a general graph is NP-hard.
F{\"u}redi et al. in 2005 conjectured that each $(4t-1)$-vertex graph $G$ with $\alpha(G)=2$ contains a connected matching of size at least $t$. 
Cambie recently showed that if this conjecture is false, then so is Hadwiger's conjecture. In this paper, we present a number of properties possessed by a counterexample to F{\"u}redi et al.'s conjecture, and then using these properties, we prove that F{\"u}redi et al.'s conjecture holds for $t\leq22$.


\end{abstract}

\noindent\textbf{Mathematics Subject Classification}: 05C70; 05C83

\noindent\textbf{Keywords}: Hadwiger's conjecture; connected matchings; matchings

\section{Introduction}\label{sec:intro}

	All graphs considered in this paper are finite and simple. Let $K_{n}$ be a clique on $n$ vertices.
Let $\omega(G), \alpha(G)$ and $\chi(G)$ be the clique number, the independence number and the chromatic number of a graph $G$, respectively. If a graph $H$ can be obtained from $G$ by deleting vertices or edges and contracting edges, then $H$ is called a \emph{minor} of $G$. Given disjoint subsets $A$ and $B$ of $V(G)$, we say that $A$ is \emph{complete} to $B$ if  each vertex in $A$ is adjacent to all vertices in $B$. Analogously, $A$ is \emph{anti-complete} to $B$ if there exists no edge between $A$ and $B$.


In 1943, Hadwiger \cite{HU} proposed the following famous conjecture.

\begin{conj}[\cite{HU}]\label{HC}
Every graph $G$ contains $K_{\chi(G)}$ as a minor.
\end{conj}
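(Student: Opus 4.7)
The final statement is Hadwiger's Conjecture itself, open since 1943 and widely regarded as one of the deepest problems in graph theory. Any proof plan is therefore aspirational; I can only describe the template that partial progress has followed and which the present paper implicitly adopts through its focus on the $\alpha(G)=2$ case.

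The natural first move is to pass to a minimum counterexample $G$: a graph with $\chi(G)=k$ and no $K_k$-minor, minimizing first $|V(G)|$ and then $|E(G)|$. Such a $G$ is $k$-critical, so $\delta(G)\ge k-1$, and contraction-critical, so every edge lies in a triangle and no two non-adjacent vertices admit a small separator of a forbidden type. Combining these local constraints with the Mader--Kostochka--Thomason density bound (average degree $c\,k\sqrt{\log k}$ forces a $K_k$-minor) would place any minimum counterexample in a narrow density window and, one hopes, force additional structural rigidity that could then be exploited to build the required minor.

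The second, and for this paper the operative, move is to exploit the independence number. From the inequality $\chi(G)\le\alpha(G)\cdot\eta(G)$, where $\eta(G)$ denotes the Hadwiger number, one sees that verifying Hadwiger's conjecture for graphs with fixed $\alpha$ reduces to producing $\lceil n/\alpha\rceil$ pairwise-joinable connected branch sets in an $n$-vertex graph. When $\alpha(G)=2$, after quotienting by twin-equivalence, this becomes the problem of finding a large connected matching, which is exactly the F\"uredi et al.\ conjecture pursued here. An inductive strategy for general $\alpha$ -- delete a maximum stable set $S$, obtain branch sets on $G-S$ by induction, and extend them across $S$ by an augmenting-path argument -- is natural, but the extension step has so far resisted a lossless formulation.

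The main obstacle, and the reason the conjecture has withstood eighty years of attack, is the lack of a structural description of extremal graphs for $k\ge 7$. Robertson, Seymour and Thomas reached $k=6$ only through a heavy structural reduction to the Four Colour Theorem, and no analogue is known beyond. I therefore do not expect a single global argument to settle the conjecture; the realistic route, visible already in this paper, is to prove Hadwiger in increasingly large blocks (small $k$, or bounded $\alpha$) while accumulating structural constraints that any hypothetical minimum counterexample must satisfy, and to hope that these blocks eventually exhaust the possibilities or reveal a decomposition theorem strong enough to drive a global proof.
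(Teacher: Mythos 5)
This statement is Hadwiger's Conjecture itself; the paper states it as Conjecture~\ref{HC} with a citation to Hadwiger (1943) and offers no proof, since it is a famous open problem. You correctly recognize this and do not claim a proof, so there is nothing to compare: your text is a survey of known partial strategies rather than an argument, and that is the only honest response here. One small caution: the inequality $\chi(G)\le\alpha(G)\cdot\eta(G)$ you invoke is not a known theorem (it is itself a weakening of Hadwiger's conjecture that remains open in general), so it should not be presented as the starting point of a reduction; the actual link used in this paper is the Plummer--Stiebitz--Toft equivalence between Hadwiger for $\alpha\le 2$ and the existence of a $K_{\lceil n/2\rceil}$-minor, together with Cambie's reduction to connected matchings.
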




When we consider those graphs with independence number at most 2, Hadwiger's conjecture implies the following conjecture.

\begin{conj}\label{weakHadw}
Every graph $G$ with $\alpha(G)\leq2$ contains $K_{\lceil \frac{|V(G)|}{2}\rceil}$ as a minor.
\end{conj}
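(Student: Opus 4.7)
The plan is to establish Conjecture \ref{weakHadw} by partitioning $V(G)$ into $\lceil n/2\rceil$ connected branch sets pairwise joined by an edge of $G$. Because this forces an average branch-set size of at most $2$, the natural witness is matching-based: when $n=2k$, I would aim at a perfect matching $M=\{e_1,\dots,e_k\}$ whose edges are pairwise joined in $G$, that is, a \emph{connected perfect matching}; contracting each $e_i$ then yields $K_k=K_{\lceil n/2\rceil}$. For odd $n=2k+1$, I would seek a connected matching of size $k$ together with an unmatched vertex $v$ that has a neighbour in every matched pair.

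First I would verify that $G$ always admits a near-perfect matching. Any two vertices unsaturated by a maximum matching $M$ must be non-adjacent (otherwise $M$ extends), so the unsaturated set is independent in $G$ and therefore has size at most $2$ by $\alpha(G)\leq 2$; consequently the matching number is $\lfloor n/2\rfloor$ or at worst one less. Combined with the density bound $|E(G)|\geq n(n-2)/4$ coming from Mantel's theorem applied to $\overline G$, I would then enter the substantive \emph{promotion} step: upgrading a near-perfect matching to a connected one. The tool is an alternating-path exchange fuelled by the density estimate: if a pair $e_i,e_j\in M$ has no joining edge, $\alpha(G)\leq 2$ forces the common neighbourhood of $e_i\cup e_j$ in $V(G)\setminus(e_i\cup e_j)$ to be large, and I would use this either to swap one of $e_i,e_j$ for a new matching edge that restores pairwise adjacency, or to absorb a bad endpoint into an enlarged three-vertex branch set while making another branch set a singleton, keeping the total branch-set count at $\lceil n/2\rceil$. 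The invariants to maintain through this exchange process are exactly the extremal conditions the paper derives in later sections for counterexamples to the F\"uredi et al.\ conjecture, pushed further to the perfect-matching threshold.

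The principal obstacle is precisely what keeps Conjecture \ref{weakHadw} open. Even granting $|E(G)|=\Theta(n^2)$ and full use of $\alpha(G)\leq 2$, one cannot a priori exclude a ``rigid'' counterexample in which every near-perfect matching contains a pair of edges inducing $2K_2$ and in which each local repair of one bad pair creates a new bad pair elsewhere. Overcoming this demands either (i) a structural classification of minimal counterexamples that rules out such rigid graphs, or (ii) a genuine departure from pure matching contractions toward branch sets of mixed size drawn from a richer family, balanced by precise accounting so that the count $\lceil n/2\rceil$ is preserved. Since the present paper only resolves the F\"uredi et al.\ version for $t\leq 22$, and Cambie's reduction places the connected matching problem squarely inside Hadwiger territory, I would not expect to close this gap without substantially new structural input beyond extending the counterexample analysis of later sections to the perfect-matching regime; that extension is where I see the genuine barrier to an unconditional proof.
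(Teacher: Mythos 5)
The statement you were asked to prove is Conjecture \ref{weakHadw}, which the paper does not prove: it records it as an open problem, equivalent (by Plummer, Stiebitz and Toft) to Hadwiger's conjecture restricted to graphs with $\alpha\le 2$, and explicitly says it is unsolved. So there is no proof in the paper to compare against, and your text is, as you yourself acknowledge, a strategy sketch rather than a proof: the entire ``promotion'' step is missing, and that step is essentially the open problem.

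There is also a concrete flaw in the primary plan, beyond mere incompleteness. Your intended witness is a connected matching of size $\lceil n/2\rceil$ (a connected perfect or near-perfect matching), but such an object provably fails to exist in general. The paper's own extremal example $K_{2t-1}\cup K_{2t-1}$ defeats it: this graph has $n=4t-2$ vertices and $\alpha=2$, yet any connected matching must lie entirely inside one of the two cliques (edges in different components cannot be joined), so $\cm = t-1\approx n/4$, far below $n/2$; meanwhile the conjecture holds for it trivially because $K_{2t-1}=K_{n/2}$ is already a subgraph. Hence no alternating-path exchange can promote a near-perfect matching to a connected one here --- the obstruction is not ``rigidity'' of the matchings of some hypothetical counterexample, but the nonexistence of the target object in a graph for which the conjecture is true. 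Any viable witness must allow singleton branch sets forming a clique (and, to keep the count at $\lceil n/2\rceil$, branch sets of size $3$ compensated by extra singletons), i.e., the ``generalized connected matching'' of Section 2 enriched with triples --- your option (ii), which you defer rather than develop. Note finally that even the full F\"uredi--Gy\'arf\'as--Simonyi conjecture only guarantees connected matchings of size about $n/4$, so it would not imply Conjecture \ref{weakHadw}; the known implication (Cambie) runs in the opposite direction.
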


Plummer, Stiebitz, and Toft \cite{MM} showed that Conjecture \ref{weakHadw} is equivalent to Hadwiger's conjecture for graphs with independence number at most 2.
Conjecture \ref{weakHadw} is still unsolved and it might be a possible direction to disprove Hadwiger's conjecture.

Let $\cM$ be a set of vertex-disjoint subgraphs of a graph $G$. For each pair of subgraphs $M, M'\in\cM$, if $G$ has an edge linking $M$ and $M'$, then we say that $\cM$ is {\em connected}. If $\cM$ is connected and consists of edges, 
then we say that $\cM$  is a \emph{connected matching}. Let $\cm(G)$ represent the size of a largest connected matching in $G$.

The problem to find large connected matchings in graphs $G$ with $\alpha(G)=2$ is closely related to Conjecture \ref{weakHadw}. The problem of finding a large connected matching in a general graph is NP-hard.
F{\"u}redi, Gy{\'a}rf{\'a}s and Simonyi~\cite{ZA} in 2005 proposed a conjecture about the following precise version on the question of determining $\cm(G)$.
\begin{conj}[\cite{ZA}]\label{conjFGS}
	Let $G$ be a $(4t-1)$-vertex graph with $\alpha(G) =2$. 
Then $\cm(G) \ge t.$
\end{conj}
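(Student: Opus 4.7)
My plan is to argue by contradiction from a vertex-minimum counterexample. Let $G$ have $n=4t-1$ vertices, $\alpha(G)=2$, and $\cm(G)\le t-1$. The first step is cheap: a Tutte--Berge argument forces $\nu(G)=2t-1$, since any Tutte set $S$ with $o(G-S)-|S|\ge 3$ would supply three vertices, one from each of three odd components of $G-S$, forming an independent triple and contradicting $\alpha(G)=2$. Fix a matching $M$ of size $2t-1$ and let $v$ be the unmatched vertex. Associate to every matching $N$ its \emph{link graph} $\Gamma(N)$ on vertex set $N$, with $e\sim f$ whenever $G$ has at least one edge between $V(e)$ and $V(f)$: a connected matching of size $s$ in $G$ is precisely a clique of size $s$ in $\Gamma(N)$ for some $N$, and the same three-edges argument yields $\alpha(\Gamma(N))\le 2$ for every $N$. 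The goal becomes to exhibit some $N$ with $\omega(\Gamma(N))\ge t$.

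The purely Ramsey content of ``$\Gamma(M)$ has $2t-1$ vertices and $\alpha\le 2$'' is far too weak to force $\omega(\Gamma(M))\ge t$, so the proof must exploit the global structure of $G$ via \emph{augmenting moves} on $M$. A typical move takes a non-matching edge $xy$ of $G$ and, possibly via an alternating path through the unmatched vertex $v$, exchanges one or two edges of $M$ for new edges, producing a matching $M'$ with $\omega(\Gamma(M'))>\omega(\Gamma(M))$. The assumption $\cm(G)\le t-1$ must block every such move, which in turn forces strong structural information on $G$: I expect that (i) $\omega(\Gamma(M))=t-1$ and the maximum clique $C$ of $\Gamma(M)$ is essentially unique; (ii) the non-edges of $\Gamma(M)$ form a highly restricted bipartite subgraph between $C$ and $M\setminus C$ whose endpoints obey sharp common-neighbour conditions in $G$; (iii) the unmatched vertex $v$ has a severely restricted attachment pattern to $V(C)$; and (iv) no single-vertex deletion of $G$ yields a smaller counterexample, by the minimality of $G$. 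Each such property translates into a numerical inequality among $t$, $|C|$, and the sizes of various intersection sets.

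The main obstacle is the sheer combinatorial variety of augmenting moves that must be classified and simultaneously blocked, since $\bar G$ is merely triangle-free and thus admits many locally distinct realisations. Once the catalogue of forbidden configurations is assembled, combining the accumulated arithmetic constraints should show that $4t-1$ vertices are too few to accommodate the forced structure and deliver the contradiction. I expect this plan to succeed for $t$ below a concrete threshold; pushing it to all $t$ appears to require either a strictly stronger augmenting framework or a more global invariant than the ones above, and this is where I anticipate the argument for the unrestricted conjecture to break down.
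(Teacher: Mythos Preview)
The statement you are attempting to prove is an \emph{open conjecture}; the paper does not contain a proof of it in full generality, and your proposal does not contain one either. You yourself acknowledge this in the final paragraph: the augmenting-move catalogue is only ``expected'' to succeed below some concrete threshold, and you ``anticipate the argument for the unrestricted conjecture to break down.'' So what you have written is a heuristic programme, not a proof. The preliminary observations are sound --- the Tutte--Berge argument giving $\nu(G)=2t-1$ is correct, as is the bound $\alpha(\Gamma(N))\le 2$ on the link graph of any matching --- but from that point on every step is aspirational: you \emph{expect} the maximum clique of $\Gamma(M)$ to be essentially unique, you \emph{expect} the forbidden augmenting configurations to yield enough arithmetic constraints, and so on. None of these expectations is substantiated, and the core difficulty you identify (the Ramsey content of ``$2t-1$ vertices, $\alpha\le2$'' is far too weak) is precisely why the conjecture remains open.

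What the paper actually does is prove the conjecture only for $t\le 22$, via a route quite different from yours. Rather than studying the link graph of a near-perfect matching, the paper bounds the size of any pair of anti-complete cliques in a minimal counterexample: it shows $|S_1|+|S_2|\le t-4$ whenever $G[S_1],G[S_2]$ are anti-complete cliques. This immediately gives $|\overline{N(v)}|\le t-5$ for every vertex $v$, hence $|N(v)|\ge 3t+3$, and then an averaging argument over $N(v)$ locates a neighbour $u$ with at most $\lfloor (t-5)(t-6)/(3t+3)\rfloor$ non-neighbours in $\overline{N(v)}$; for $t\le 22$ this is at most $3$, which lets one delete $u,v$ and two vertices of $\overline{N(v)}$ and induct. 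Your link-graph framework and the paper's anti-complete-clique bounds are genuinely different lenses on the problem; if you want to pursue your approach toward a partial result, you would need to make the ``catalogue of augmenting moves'' explicit and extract a concrete threshold, which you have not done.
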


Cambie \cite{SC} in 2021 showed that if Conjecture~{\rm\ref{HC}} is true, then so is Conjecture~{\rm\ref{conjFGS}}.
Conjecture~{\rm\ref{conjFGS}} would be sharp due to the graph $K_{2t-1} \cup K_{2t-1}.$
The current best known lowerbound for $\cm(G)$ in this setting is of the order $\Theta( n^{4/5} \log^{1/5} n )$ by~\cite{JF}.
In this paper, we examine structural properties possessed by counterexamples to Conjecture \ref{conjFGS}. Theorems \ref{thm1} and \ref{lem2} are two main results obtained in this paper.
\begin{thr}\label{thm1}
Let $G$ be a minimal counterexample to Conjecture {\rm\ref{conjFGS}}.
For any disjoint subsets $S_1, S_2\subseteq V(G)$ {\rm(}some $S_i$ maybe empty{\rm)}, if $G[S_1]$ and $G[S_2]$ are anti-complete cliques, then $|S_1|+|S_2|\leq t-4$.
\end{thr}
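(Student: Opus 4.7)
The plan is to argue by contradiction. Suppose $|S_1|+|S_2|\geq t-3$; write $a=|S_1|$ and $b=|S_2|$, and assume without loss of generality $a\geq b$. The aim is to build a connected matching of size at least $t$ in $G$, contradicting $\cm(G)\leq t-1$.

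The structural starting point is that $\alpha(G)=2$ together with the anti-completeness of $S_1, S_2$ forces every $v\in V(G)\setminus(S_1\cup S_2)$ to be complete to at least one of $S_1, S_2$: otherwise a non-neighbor in each side would form an independent triple with $v$. This yields a partition $V(G)\setminus(S_1\cup S_2)=A_1\cup A_2\cup A_{12}$, with $A_i$ consisting of vertices complete to $S_i$ only and $A_{12}$ of vertices complete to both. The crucial consequence is that any matching edge with an endpoint in $A_1\cup A_{12}$ is automatically linked, by an edge of $G$, to every matching edge sitting inside the clique $S_1$, and symmetrically for $S_2$.

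The engine is the minimality hypothesis: every induced subgraph $H\subseteq G$ with $|V(H)|=4t'-1$ and $t'<t$ satisfies $\cm(H)\geq t'$. I would apply this to an induced subgraph of $G[A_1\cup A_{12}]$ of size $4t'-1$, for $t'$ as large as possible, to obtain a connected matching $M'$ of size $t'$ that avoids $S_1$ entirely, and then enlarge $M'$ by adding $\lfloor a/2\rfloor$ matching edges inside the clique $S_1$. The added edges are pairwise linked through the clique $S_1$ and, by the previous paragraph, each is linked to every edge of $M'$, so the union is a connected matching in $G$ of size $t'+\lfloor a/2\rfloor$. Plugging in the partition identity $|A_1|+|A_{12}|=4t-1-a-b-|A_2|$ and optimizing $t'$ shows $t'+\lfloor a/2\rfloor\geq t$ whenever $a$ exceeds $b+|A_2|$ by a small constant absorbing the congruence rounding; this is the source of the ``$-4$'' in the conclusion. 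When the $S_1$-side fails to close the inequality, I would run the symmetric argument on the $S_2$-side using $G[A_2\cup A_{12}]$, which handles the case that $b$ exceeds $a+|A_1|$ by a similar margin.

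The main obstacle is the residual \emph{middle} case in which neither $a-b\geq|A_2|$ nor $b-a\geq|A_1|$ holds with the needed slack. Here I expect the proof to exploit the vertices of $A_{12}$ (which are complete to both cliques), either by running minimality on a different subgraph such as $G[A_{12}]$ or $G[S_2\cup A_2\cup A_{12}]$, or by allowing hybrid matching edges between $S_1$ and $A_{12}$ that still link properly to the $S_1$-clique matching. The degenerate configuration $S_2=\emptyset$ also deserves separate treatment, since the structural observation of the second paragraph is vacuous there; one can recover the argument by designating any vertex of $V(G)\setminus S_1$ with a non-neighbor in $S_1$ as a one-vertex replacement for $S_2$. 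The bulk of the technical effort of the proof will lie in this middle-case analysis, where the arithmetic is tight.
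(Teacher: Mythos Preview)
Your partition $A_1, A_2, A_{12}$ coincides with the paper's $C_1, C_2, C_0$, but the core strategy has a genuine gap: the ``easy'' cases you describe turn out to be vacuous, so the entire proof lives in the ``middle case'' you defer. Concretely, your plan produces a connected matching of size $t' + \lfloor a/2\rfloor$ with $t' \le \lfloor(4t-a-b-|A_2|)/4\rfloor$, and reaching $t$ requires roughly $a \ge b + |A_2|$. But the paper proves, as an intermediate claim in its own argument, that in any minimal counterexample one necessarily has $|A_2| > 3a$ and $|A_1| > 3b$; hence neither $a-b \ge |A_2|$ nor $b-a \ge |A_1|$ can ever hold, and the shortfall is of order $\Theta(t)$, not a constant absorbable by rounding or by your hybrid suggestions. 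The inefficiency is structural: pairing vertices of $S_1$ with one another spends two clique vertices per matching edge.

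The paper instead matches $S_1$ into $A_2$ and $S_2$ into $A_1$, obtaining $|S_1|+|S_2| \ge t-3$ pairwise-linked edges (one per vertex of $S_1\cup S_2$), and supplies the missing three via a triangle in $A_1$ matched into $A_2$. To guarantee that these cross-matchings exist and that $|A_1|,|A_2|$ are large enough, the paper chooses $|S_1|+|S_2|$ maximal among anti-complete clique pairs and invokes minimality through Lemma~\ref{lem1} (no dominating matching). The case $S_2=\emptyset$ is disposed of beforehand by Corollary~\ref{omega}, which gives $\omega(G)\le t-4$ directly; your one-vertex replacement does not work as stated, since a vertex with a non-neighbour in $S_1$ need not be anti-complete to all of $S_1$.
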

\begin{thr}\label{lem2}
Let $G$ be a $(4t-1)$-vertex graph with $\alpha(G)=2$ and with $\cm(G)\le t-1$. 
Let $S_0$, $S_1$ and $S_2$ be disjoint subsets in $V(G)$ {\rm(}some $S_i$ maybe empty{\rm)}, all of which induce cliques in $G$. Assume that $S_0$ is complete to  $S_1\cup S_2$, and $S_1$ is anti-complete to $S_2$. Then $|S_0|+|S_1|+|S_2|\leq t-1$.
\end{thr}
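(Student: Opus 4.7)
The plan is to argue by contradiction: assume $|S_0|+|S_1|+|S_2|\ge t$ and exhibit a connected matching of size at least $t$, contradicting $\cm(G)\le t-1$. Write $s_i=|S_i|$ and $R=V(G)\setminus(S_0\cup S_1\cup S_2)$, so $|R|=4t-1-s_0-s_1-s_2$. The first step uses $\alpha(G)=2$ to constrain $R$: if some $v\in R$ had a non-neighbor $u_1\in S_1$ and a non-neighbor $u_2\in S_2$, then $\{v,u_1,u_2\}$ would be an independent triple (as $S_1$ is anti-complete to $S_2$). Hence each $v\in R$ is complete to $S_1$ or to $S_2$, and I write $R=A_1\cup A_2$ with $A_i$ complete to $S_i$.

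Next I would assemble a connected matching $M$ in stages using the ``hub'' structure: since $S_1$ is a clique complete to $S_0\cup A_1$, any two matching edges each with an endpoint in $S_1$ are linked via the clique $S_1$, and symmetrically for $S_2$. Since $S_0$ is complete to $S_1\cup S_2$, any edge with an endpoint in $S_0$ bridges the $S_1$-based edges to the $S_2$-based edges. Greedily, I would pair $S_1$-vertices with distinct partners chosen from $S_0\cup A_1$ (preferring $S_0$-partners first), then pair $S_2$-vertices with distinct partners from the remaining $S_0\cup A_2$, then match leftover $S_0$ internally, and finally add edges inside $R$ if needed (using that $\alpha(G[R])\le 2$ forces large matchings in $R$). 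Counting against the identity $|R|=4t-1-(s_0+s_1+s_2)$ together with the hypothesis $s_0+s_1+s_2\ge t$ should push the total size of $M$ to at least $t$.

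Connectivity of $M$ is then verified class by class: edges touching $S_1$ are pairwise linked through the clique $S_1$; similarly for $S_2$; any edge touching $S_0$ joins the $S_1$- and $S_2$-classes because $S_0$ is complete to both. Pure $R$-edges are attached via the dense neighborhood structure forced by $\alpha(G)=2$ (applied to a pure $R$-edge together with vertices of $S_1\cup S_2$).

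I expect the main obstacle to be guaranteeing the $S_1$-$S_2$ bridge when $s_0$ is small and $A_1\cap A_2$ is empty: then an $S_1$-edge whose other endpoint lies in $A_1\setminus S_0$ need not connect to an $S_2$-edge whose other endpoint lies in $A_2\setminus S_0$. Handling this requires reserving at least one $S_0$-vertex as a bridge, or else absorbing extra edges inside the cliques $S_1,S_2$ themselves (trading partner-count for matching-size), and to separately dispose of the degenerate cases: if $s_2=0$, then $S_0\cup S_1$ is already a clique of size at least $t$, and a near-perfect matching inside it, extended by a few $R$-incident edges chosen to attach to the clique, suffices; if $s_0=0$, the bridging burden falls entirely on edges inside $R$, and here I would locate a hub in $A_1\cap A_2$ (forced to exist once $|R|$ exceeds a small bound, again by $\alpha(G)=2$) to anchor the matching.
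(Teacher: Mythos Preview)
Your proposal is a plan rather than a proof, and the central counting step is never carried out. After pairing each $S_1$-vertex with a partner and each $S_2$-vertex with a partner, and then matching the remaining $S_0$-vertices internally, you obtain at most $s_1+s_2+\lfloor s_0/2\rfloor$ edges meeting $S_0\cup S_1\cup S_2$; under the minimal hypothesis $s_0+s_1+s_2=t$ this is only $t-\lceil s_0/2\rceil$. You would then need roughly $s_0/2$ further edges inside $R$, each linked to the hub, and you give no mechanism for producing them. Your degenerate case $s_2=0$ exposes the gap cleanly: $S_0\cup S_1$ is then a clique of size at least $t$, but a near-perfect matching inside it has only $\lfloor t/2\rfloor$ edges, so ``extended by a few $R$-incident edges'' would in fact require about $t/2$ additional carefully-linked edges --- not ``a few'', and no argument is offered. (Indeed, showing that a clique of size $t$ already forces $\cm(G)\ge t$ is itself nontrivial and is precisely the content of the lemma you are missing.)

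The paper bypasses edge-only matchings entirely. It invokes Cambie's lemma (Lemma~\ref{k1+k2<m}): under the standing hypotheses every \emph{generalized} connected matching --- a connected family of singletons and edges --- has size at most $\cm(G)$, so in particular $\omega(G)\le t-1$. With this in hand the argument is short. Partition the complement into $C_0,C_1,C_2$ (your $A_1,A_2$ are $C_1\cup C_0$ and $C_2\cup C_0$); by pigeonhole one side, say $C_1\cup C_0$, has size at least $t+s_2-s_1>s_2$. Consider the bipartite graph $H$ between $C_1\cup C_0$ and $S_2$. By K\"{o}nig's theorem either $H$ has a matching $M$ saturating $S_2$, and then $M\cup S_1\cup S_0$ is a generalized connected matching of size $s_2+s_1+s_0\ge t$ (the singletons in $S_0\cup S_1$ form a clique complete to the $S_2$-endpoints of $M$); or $H$ has a vertex cover of size $<s_2$, whose removal leaves a set in $C_1\cup C_0$ that is anti-complete to some vertex of $S_2$, hence is a clique that together with $S_1$ has size at least $t$. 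Both branches contradict Cambie's lemma. Your proposal lacks both key ingredients: the generalized-matching reduction (which is exactly what lets singleton clique-vertices count toward the target $t$) and the K\"{o}nig dichotomy.
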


For any non-empty disjoint subsets $S_1, S_2\subseteq V(G)$, if $\alpha(G)=2$ and 
$S_1$ is anti-complete to $S_2$, then $G[S_1]$ and $G[S_2]$ are obviously cliques. 
The reason why we ask $G[S_1]$ and $G[S_2]$ in Theorems \ref{thm1} and \ref{lem2} being clique is that some $S_i$ maybe empty. Theorem \ref{thm1} implies $\omega(G)\leq t-4$. Moreover, using Theorem \ref{thm1} and following the idea in \cite{ZA}, we can  prove 

\begin{thr}\label{thm3}
Conjecture \ref{conjFGS} holds for $t\leq22$.
\end{thr}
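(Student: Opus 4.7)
The plan is to mimic the Ramsey-based approach of Füredi, Gy\'arf\'as, and Simonyi~\cite{ZA}, substituting their clique bound with the sharper one supplied by Theorem~\ref{thm1}. Assume for contradiction that Conjecture~\ref{conjFGS} fails for some $t\leq 22$, and let $G$ be a minimal counterexample, so $|V(G)|=4t-1$, $\alpha(G)=2$, and $\cm(G)\leq t-1$.

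The first step is to extract two structural consequences of Theorem~\ref{thm1}. Applied with $S_2=\emptyset$ and $S_1$ a maximum clique, it gives $\omega(G)\leq t-4$. For each $v\in V(G)$, since $\alpha(G)=2$ the non-neighborhood $M_v:=V(G)\setminus N[v]$ is a clique, and it is anti-complete to $\{v\}$ by definition; applying Theorem~\ref{thm1} with $S_1=\{v\}$ and $S_2=M_v$ then yields $|M_v|\leq t-5$, so that the minimum degree satisfies $\delta(G)\geq 3t+3$.

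Next I would invoke Ramsey's theorem. Since $\alpha(G)=2$ excludes independent triples, Ramsey forces $\omega(G)\geq k$ whenever $4t-1\geq R(3,k)$. Using the best-known upper bounds on the small Ramsey numbers (e.g.\ from Radziszowski's survey), one directly verifies $R(3,t-3)\leq 4t-1$ for $t$ in an initial range, which forces $\omega(G)\geq t-3$ and contradicts $\omega(G)\leq t-4$, handling those cases. For the remaining (larger) values of $t$ I would exploit the min-degree bound by applying Ramsey inside neighborhoods and common neighborhoods of growing cliques: $G[N(v)]$ has $\alpha\leq 2$ and at least $3t+3$ vertices, so Ramsey inside it produces a clique of size $k'$ in $N(v)$ whenever $R(3,k')\leq 3t+3$, hence a clique of size $k'+1$ in $G$; more generally, for any clique $Q$ of size $q$ in $G$ one has $|\bigcap_{u\in Q}N(u)|\geq 4t-1-q(t-4)$ using $|M_u|\leq t-5$, and Ramsey inside this common neighborhood extends $Q$ further. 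Iterating this amplification, while re-invoking Theorem~\ref{thm1} at each stage to exclude the anti-complete clique pairs that could otherwise block a step, is expected to push $\omega(G)$ past $t-4$ for every $t\leq 22$, delivering the desired contradiction.

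The principal obstacle will be the numerical bookkeeping for the largest values of $t$ (roughly $t\in\{18,\ldots,22\}$), where the upper bounds on $R(3,t-3)$ are not sharp enough for Ramsey in $G$ alone. In this regime the iteration depth inside common neighborhoods, together with the interplay between the growth of the clique and the shrinkage of its common neighborhood, must be chosen carefully, and Theorem~\ref{thm1} has to be applied case-by-case to carefully chosen clique pairs in order to close the last gap and finish the argument.
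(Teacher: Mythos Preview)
Your plan has a real gap: the Ramsey amplification cannot force $\omega(G)\ge t-3$ anywhere near $t=22$. With $|V(G)|=87$ and $\alpha(G)=2$, the best known upper bounds on $R(3,k)$ only guarantee a clique of size about $14$ or $15$. Pushing through common neighbourhoods does not rescue this: the common neighbourhood of a $q$-clique has at least $4t-1-q(t-4)$ vertices, which for $t=22$ equals $33$ at $q=3$ (Ramsey there yields only a $K_8$, hence a $K_{11}$ in $G$, since $R(3,8)=28\le 33<36=R(3,9)$), drops to $15$ at $q=4$, and is negative for $q\ge 5$. No iteration of these steps comes close to a $K_{19}$, and Theorem~\ref{thm1} provides no mechanism for \emph{enlarging} a clique --- it only bounds sizes of anti-complete pairs from above. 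The sentence ``is expected to push $\omega(G)$ past $t-4$'' is precisely the unproven step, and it is false in this numerical range.

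The paper's argument is completely different and does not attempt to bound $\omega(G)$ from below at all; it uses induction via a near-dominating edge. Fix any $v$ and set $Y=V(G)\setminus N[v]$; Theorem~\ref{thm1} with $S_1=\{v\}$, $S_2=Y$ gives $|Y|\le t-5$, hence $|N(v)|\ge 3t+3$. For each $w\in Y$ one has $v\in\overline{N(w)}$ and $|\overline{N(w)}|\le t-5$, so $w$ has at most $t-6$ non-neighbours inside $N(v)$. Averaging, some $u\in N(v)$ has at most $(t-5)(t-6)/(3t+3)$ non-neighbours in $Y$, and this quantity is at most $3$ precisely when $t\le 22$. Delete $v$, $u$, and two further vertices of $Y$, chosen first from $Y\setminus N(u)$. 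The remaining graph has $4(t-1)-1$ vertices, so by minimality it contains a connected $(t-1)$-matching $M'$; since at most one vertex outside $N(u)\cup N(v)$ survives, every edge of $M'$ meets a neighbour of $u$ or of $v$, and $M'\cup\{uv\}$ is a connected $t$-matching, the desired contradiction. The missing idea in your proposal is this inductive step through a dominating edge, not clique growth.
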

\begin{proof}[ Proof of Theorem \ref{thm3} assuming Theorem \ref{thm1}.]
Assume to the contrary that $G$ is a minimal counterexample to Theorem \ref{thm3}.
Arbitrarily choose a vertex $v\in V(G)$. Set 
$Y:=V(G)\backslash (N(v)\cup\{v\})$.
By Theorem \ref{thm1}, we have $|Y|\leq t-5$, implying $|N(v)|\geq 3t+3$. Then some vertex $u\in N(v)$ is non-adjacent to at most $\frac{|Y|(t-6)}{|N(v)|}$ vertices in $Y$ by  Theorem \ref{thm1} again. That is, the vertex $u$ is non-adjacent to at most 3 vertices in $Y$ as $t\leq22$. This allows us to carry out the inductive proof. Removing $v$, $u$ and two further vertices in $Y$ (as many from $Y-N(u)$ as possible) the remaining graph has a connected $(t-1)$-matching and the edge $uv$ extends it to a connected $t$-matching, which is a contradiction.
\end{proof}



\section{Properties Possessed by a Counterexample to Conjecture {\rm\ref{conjFGS}}}
For any vertex subset $A$ of a graph $G$, let $G[A]$ denote the subgraph of $G$ induced by $A$. 
We denote by $N_{G}(A)$ the vertex set in $V(G)\backslash A$ that has a neighbour in $A$.
Set $N_{G}[A]:=N_{G}(A)\cup A$
and $\overline{N_G(A)}:=V(G)\backslash N_{G}[A]$. Note that $\overline{N_G(A)}$ induces a clique when $\alpha(G)=2$.
When there is no danger of confusion, all subscripts will be omitted.
For simplicity, when $A = \{a\}$, the sets $N(\{a\})$, $N[\{a\}]$, and $\overline{N(\{a\})}$  are denoted by $N(a)$, $N[a]$, and $\overline{N(a)}$, respectively.

Let $\cM$ be a set of vertex-disjoint subgraphs of a graph $G$. If $\cM$ is connected and consists of vertices and edges, 
then we say that $\cM$  is a \emph{generalized connected matching}. Evidently, when $\cM$  is a generalized connected matching, the union of subgraphs in $\cM$ that have only one vertex induces a clique of $G$, and contracting each edge in $\cM$ gets a $|\cM|$-clique minor.

In the rest of this section, we always assume that $G$ is a graph of order $4t-1$ with $\alpha(G)=2$.
\begin{lem}[\cite{SC}]\label{k1+k2<m}
When $\cm(G)\le t-1$, every generalized connected matching of $G$ has size at most $\cm(G)$.
\end{lem}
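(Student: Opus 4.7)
The plan is the following: starting from a generalized connected matching $\mathcal{M}=V_0\cup E_0$ with singletons $V_0$ and edges $E_0$, I would either transform $\mathcal{M}$ into an honest connected matching of the same size, or derive the existence of a connected matching of size at least $t$ in $G$, which is impossible under the hypothesis $\cm(G)\le t-1$. Either outcome yields $|\mathcal{M}|\le\cm(G)$.

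The main move is a swap. Note first that $V_0$ is a clique, since every pair of elements in a generalized connected matching must be joined by an edge of $G$. Whenever some $v\in V_0$ has a neighbour $u$ in $U:=V(G)\setminus V(\mathcal{M})$, I replace the singleton $\{v\}$ by the edge $vu$: every edge of $G$ that witnessed connectivity of $\{v\}$ with another element of $\mathcal{M}$ still has $v$ as endpoint, so the new collection is again a generalized connected matching of the same size, with $|V_0|$ strictly smaller. Iterating, I may assume either $V_0=\emptyset$ (in which case $\mathcal{M}$ is already a connected matching, and we are done), or $V_0\ne\emptyset$ is anti-complete to $U$.

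In the second case, $\alpha(G)=2$ together with $V_0$ anti-complete to $U$ forces $U$ to be a clique, since two non-adjacent vertices of $U$ and any vertex of $V_0$ would form an independent triple. A near-perfect matching inside the clique $V_0$, a near-perfect matching inside the clique $U$, and $E_0$ itself are each already connected matchings, so $|V_0|,|U|\le 2t-1$ and $|E_0|\le t-1$. Combined with $|V_0|+|U|+2|E_0|=4t-1$, this forces $|V_0|+|U|\ge 2t+1$, and in particular $|V_0|\ge 2$.

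Finally I pair the vertices of $V_0$ inside its clique to form a matching $M_{V_0}$ of size $\lfloor|V_0|/2\rfloor$, and take $M_{V_0}\cup E_0$. This is a connected matching: pairs inside $M_{V_0}$ lie in the clique $V_0$; pairs inside $E_0$ were connected in $\mathcal{M}$; and any $v_1v_2\in M_{V_0}$ is connected to any $e\in E_0$ because $v_1$, as a singleton of $\mathcal{M}$, was adjacent to some endpoint of $e$. Since $|V_0|+|U|=4t-1-2|E_0|$ is odd, $|V_0|$ and $|U|$ have opposite parities; using $|V_0|+2|E_0|=4t-1-|U|$ together with $|U|\le 2t-1$ in the case $|V_0|$ is even and $|U|\le 2t-2$ in the case $|V_0|$ is odd, a short computation gives $\lfloor|V_0|/2\rfloor+|E_0|\ge t$, contradicting $\cm(G)\le t-1$. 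The main point requiring care is this last parity case-check; the rest of the argument is routine once the swap move is identified.
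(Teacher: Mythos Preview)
Your argument is correct. The swap move to extend singletons into edges using leftover vertices is clean, and the endgame parity computation is right: with $|V_0|+|U|+2|E_0|=4t-1$, the bounds $|U|\le 2t-1$ (and $|U|\le 2t-2$ when $|U|$ is even) give exactly $\lfloor|V_0|/2\rfloor+|E_0|\ge t$. One tiny remark: you might note explicitly that the case $U=\emptyset$ is absorbed into the second alternative (vacuously anti-complete) and is then killed by $|V_0|\le 2t-1$, but this is already implicit.

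As for comparison: the paper does not prove this lemma at all --- it simply quotes it from Cambie~\cite{SC}. So your write-up is strictly more self-contained than what appears here. Cambie's original argument is essentially the same idea (absorb singletons into edges, then exploit that the leftover set and the singleton set are both cliques), so your approach is the standard one.
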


By Lemma \ref{k1+k2<m} and the definition of generalized connected matching, for any counterexample $G$ to Conjecture \ref{conjFGS}, we have $\omega(G)\leq t-1$. 



\begin{proof}[Proof of Theorem \ref{lem2}.]
Set $s_i:=|S_i|$ for each $i\in\{0,1,2\}$. 
Since Lemma \ref{k1+k2<m} implies $\omega(G)\leq t-1$, we have that $$s_0+s_1,\ s_0+s_2\leq t-1,\  \text{so}\ s_0+s_1+s_2\leq 2t-2 \eqno{(2.1)}$$ as $G[S_0\cup S_1]$ and $G[S_0\cup S_2]$ are cliques.
Suppose to the contrary that $s_0+s_1+s_2 \geq t$. Then $S_1, S_2\neq \emptyset$ by (2.1). 
Set
\begin{eqnarray*}
  C_1 &:=& \{x\in V(G)\backslash (S_0\cup S_1 \cup S_2):\ x\ \text{has a non-neighbour in}\ S_2\},\\
  C_2 &:=& \{x\in V(G)\backslash (S_0\cup S_1 \cup S_2):\ x\ \text{has a non-neighbour in}\ S_1\}, \\
  C_0 &:=& V(G)\backslash (S_0\cup S_1 \cup S_2 \cup C_1 \cup C_2).
\end{eqnarray*}
Then {\bf (a)} $C_0$ is complete to $S_1 \cup S_2$ by definition. Since $\alpha(G)=2$, $S_1, S_2\neq \emptyset$, and $S_1$ is anti-complete to $S_2$, {\bf (b)} we have that $C_1$ is complete to $S_1$ and $C_2$ is complete to $S_2$, implying $C_1 \cap C_2=\emptyset$. That is, $(S_0, S_1, S_2, C_0, C_1, C_2)$ is a partition of $V(G)$.

By (2.1), we have  $|C_1|+|C_2|+|C_0|\geq 2t+1$, so either $|C_1\cup C_0|\geq t+s_2-s_1$ or $|C_2\cup C_0|\geq t+s_1-s_2$. By symmetry we may assume that $|C_1\cup C_0|\geq t+s_2-s_1$. Then  $|C_1\cup C_0|>s_2$ as $s_0+s_1\leq t-1$ by (2.1). Let $H$ be the bipartite subgraph induced by the edges between $C_1\cup C_0$ and $S_2$. Let $M$ be a matching of $H$ with $|M|$ maximal. Since each edge in $M$ has exactly one end in $S_2$ and $G[S_2]$ is a clique, $M$ is connected. (This fact will be frequently used in this section without reference.) When $|M|=s_2$, the set $M\cup S_1 \cup S_0$ is a generalized connected matching of size at least $t$ by (a) and (b), a contradiction to Lemma \ref{k1+k2<m}. So $|M|<s_2$. Since $|C_1\cup C_0|>s_2$, by K\"{o}nig's theorem, there exists a set $R \subseteq S_2\cup C_1\cup C_0$ with $|R|< s_2$ meeting all edges of $H$.
Since $S_2 \backslash R$ is anti-complete to $(C_1\cup C_0) \backslash R$, the graph  $G[(C_1\cup C_0) \backslash R]$ is a clique of size at least $t-s_1+1$ as $|C_1\cup C_0|\geq t+s_2-s_1$, implying that $G[S_1 \cup (C_1\cup C_0) \backslash R]$ contains a $t$-clique, a contradiction to Lemma \ref{k1+k2<m}.
\end{proof}

Thus, for any counterexample $G$ to Conjecture {\rm\ref{conjFGS}}, and any non-empty vertex-disjoint sets $S_1,S_2\subseteq V(G)$ such that $S_1$ is anti-complete to $S_2$, since $\alpha(G)=2$ implies that  $G[S_1]$ and $G[S_2]$ are cliques, we have $|S_1|+|S_2|\leq t-1$ by Theorem \ref{lem2}. In particular, for any $v\in V(G)$, we have $|\overline{N(v)}|\leq t-2$, which was proved in  \cite{ZA}.

\begin{lem}\label{lem-triangle}
Assume that $\cm(G)\le t-1$. For any triangle $v_1v_2v_3v_1$ in $G$, we have $|\bigcap\limits_{i=1}^3 N(v_i)|\geq t+2$.
\end{lem}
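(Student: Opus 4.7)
The plan is to prove the lemma by a direct union bound on the non-neighbourhoods, leveraging the bound $|\overline{N(v)}| \le t-2$ for each vertex that was recorded just after the proof of Theorem~\ref{lem2}.

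Write $T = \{v_1, v_2, v_3\}$. Since $T$ is a triangle, each $v_j$ with $j \neq i$ lies in $N(v_i)$, hence $\overline{N(v_i)} \subseteq V(G)\setminus T$ and $T \subseteq N[v_i]$ for every $i$. A vertex $x \in V(G)\setminus T$ fails to belong to $\bigcap_{i=1}^3 N(v_i)$ precisely when $x \in \overline{N(v_i)}$ for some $i$. Therefore
\[
\Bigl|\bigcap_{i=1}^3 N(v_i)\Bigr| \;=\; |V(G)| - |T| - \Bigl|\bigcup_{i=1}^3 \overline{N(v_i)}\Bigr| \;\ge\; (4t-1) - 3 - \sum_{i=1}^3|\overline{N(v_i)}|.
\]

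For each $i$, apply Theorem~\ref{lem2} with $S_0 = \emptyset$, $S_1 = \{v_i\}$ and $S_2 = \overline{N(v_i)}$. This is legitimate because $\alpha(G)=2$ forces $\overline{N(v_i)}$ to induce a clique (as noted in the paragraph defining $\overline{N_G(A)}$), the singleton $\{v_i\}$ is trivially a clique, and $v_i$ is anti-complete to $\overline{N(v_i)}$ by definition. Theorem~\ref{lem2} then yields $1 + |\overline{N(v_i)}| \le t-1$, i.e.\ $|\overline{N(v_i)}| \le t-2$ for each $i \in \{1,2,3\}$ (this is exactly the bound highlighted in the remark following Theorem~\ref{lem2}).

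Plugging these three inequalities into the display above gives
\[
\Bigl|\bigcap_{i=1}^3 N(v_i)\Bigr| \;\ge\; (4t-1) - 3 - 3(t-2) \;=\; t+2,
\]
as required. There is no real obstacle here: the only content is the observation that the three non-neighbourhoods $\overline{N(v_i)}$ lie entirely in $V(G)\setminus T$ (which uses the triangle hypothesis), so that a crude union bound combined with the already-established pointwise estimate $|\overline{N(v_i)}| \le t-2$ is enough. The only place one needs to be careful is to verify that the bound $|\overline{N(v_i)}| \le t-2$ is available under the hypotheses of the lemma, which it is, because the hypotheses match those in the statement of Theorem~\ref{lem2}.
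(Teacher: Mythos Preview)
Your proof is correct and is essentially identical to the paper's own argument: both invoke the bound $|\overline{N(v)}|\le t-2$ from Theorem~\ref{lem2} and then apply a union bound over the three non-neighbourhoods to obtain $|\bigcap_i N(v_i)|\ge (4t-1)-3-3(t-2)=t+2$.
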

\begin{proof}
By Theorem \ref{lem2}, we have that $|\overline{N(v)}|\leq t-2$ for any $v\in V(G)$.
Set $N:=\bigcap\limits_{i=1}^3 N(v_i)$ and $M:=V(G)\backslash (\bigcup\limits_{i=1}^3 v_i \cup N)$.
Since $M\subseteq \bigcup\limits_{i=1}^3 \overline{N(v_i)}$, we have $|M|\leq |\bigcup\limits_{i=1}^3 \overline{N(v_i)}|\leq 3(t-2)$. Thus, $|N|\geq 4t-1-(3t-6)-3=t+2$.
\end{proof}

\begin{lem}\label{lem-matching}
Assume that $\cm(G)\le t-1$. For any disjoint subsets $A, B \subseteq V(G)$ with $|A|\leq |B|=t-1$, the bipartite subgraph $H$ of $G$ induced by the edges between $A$ and $B$ contains a matching of size $|A|$.
\end{lem}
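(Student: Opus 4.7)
The plan is to prove Lemma~\ref{lem-matching} by contradiction via König's theorem, and then derive a contradiction by invoking Theorem~\ref{lem2} exactly as was done in the closing paragraph of the proof of Theorem~\ref{lem2}. Suppose the maximum matching $M$ of $H$ satisfies $|M|<|A|$. Then by König's theorem there is a vertex cover $R\subseteq A\cup B$ of $H$ with $|R|=|M|<|A|$. Set $A':=A\setminus R$ and $B':=B\setminus R$; since $R$ covers all edges of $H$, the sets $A'$ and $B'$ are anti-complete in $G$.

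Next I would check that both $A'$ and $B'$ are non-empty. Since $|R|<|A|$ we have $A'\neq\emptyset$. Since $|R|<|A|\leq|B|=t-1$, we also have $|B'|\geq|B|-|R|>0$. Because $\alpha(G)=2$ and $A'$, $B'$ are non-empty and anti-complete, both $G[A']$ and $G[B']$ are cliques. I would then apply Theorem~\ref{lem2} with $S_0=\emptyset$, $S_1=A'$, $S_2=B'$, which yields
\[
|A'|+|B'|\leq t-1.
\]

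Finally, since $R\subseteq A\cup B$ and $A,B$ are disjoint, $|A'|+|B'|=|A|+|B|-|R|=|A|+(t-1)-|R|$. Substituting into the inequality above gives $|A|\leq|R|$, contradicting $|R|<|A|$.

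No real obstacle is anticipated: the key structural input (that anti-complete pairs of cliques in $G$ have total size at most $t-1$) is precisely Theorem~\ref{lem2}, and the bookkeeping with König's theorem is standard. The only thing to be careful about is verifying that both $A'$ and $B'$ are non-empty before invoking Theorem~\ref{lem2}, and this is exactly where the hypothesis $|B|=t-1$ (rather than something smaller) is needed.
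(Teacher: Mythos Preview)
Your proof is correct and essentially identical to the paper's own argument: both proceed by contradiction, apply K\"onig's theorem to obtain a small cover $R$, observe that $A\setminus R$ and $B\setminus R$ are anti-complete cliques, and invoke Theorem~\ref{lem2} to derive a size contradiction. One small remark on your commentary: Theorem~\ref{lem2} itself permits empty $S_i$, so the non-emptiness of $A',B'$ is needed only to deduce (via $\alpha(G)=2$) that they are cliques, and this follows already from $|R|<|A|\le|B|$; the hypothesis $|B|=t-1$ is actually used in the final size count $|A'|+|B'|=|A|+(t-1)-|R|$, not for non-emptiness.
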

\begin{proof}
Suppose not. By K\"{o}nig's theorem, there exists a set $R \subseteq A\cup B$ with $|R|\leq |A|-1$ meeting all edges of $H$, so $G[A \backslash R]$ and $G[B \backslash R]$ are anti-complete cliques as $\alpha(G)=2$, which contradicts to Theorem \ref{lem2} as $|A \backslash R|+|B \backslash R|\geq t$.
\end{proof}

\begin{thr}[\cite{ZA}]\label{17}
Conjecture {\rm\ref{conjFGS}} holds for $1\leq t\leq 17$.
\end{thr}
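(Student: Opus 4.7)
The plan is to mirror the inductive strategy used in the proof of Theorem~\ref{thm3}, but relying only on the weaker bound $|\overline{N(v)}|\le t-2$ (a direct consequence of Theorem~\ref{lem2} applied with $S_0=\emptyset$, $S_1=\{v\}$, $S_2=\overline{N(v)}$), since this is the kind of structural input available in \cite{ZA}. I would argue by induction on $t$ from a small base case, disposed of by noting that when $4t-1$ is small the hypothesis $\alpha(G)=2$ together with Lemma~\ref{k1+k2<m} forces a large enough clique to supply the required connected matching.

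For the inductive step, I would pick any $v\in V(G)$, set $Y:=\overline{N(v)}$, and record $|Y|\le t-2$ and $|N(v)|\ge 3t$. A double count of non-edges between $N(v)$ and $Y$ gives that each $y\in Y$ has at most $t-3$ non-neighbors in $N(v)$ (subtracting $v\in\overline{N(y)}$), so some $u\in N(v)$ has at most $|Y|(t-3)/|N(v)|$ non-neighbors in $Y$. I would then remove $v$, $u$ and two further vertices from $Y\setminus N(u)$, leaving a graph $G'$ on $4(t-1)-1$ vertices with $\alpha(G')\le 2$, which by induction admits a connected $(t-1)$-matching $\cM$. It remains to verify that $\cM\cup\{uv\}$ is a connected $t$-matching: edges of $\cM$ contained in $N(v)$ are linked via $v$; edges with one endpoint in $Y\cap V(G')$ and the other in $N(v)$ are linked via the $N(v)$-endpoint; and edges with both endpoints in $Y\cap V(G')$ are linked through $u$, because the removal of two non-neighbors of $u$ ensures that at most one of the endpoints is non-adjacent to $u$.

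The crude average above is $\le 3$ whenever $t\le 13$, which is exactly what the removal scheme demands, so the argument goes through cleanly in that range. The main obstacle is the interval $14\le t\le 17$, where $|Y|(t-3)/|N(v)|$ can exceed $3$. I would tighten the bookkeeping in one of two ways: either incorporate non-edges internal to $Y$ into $|\overline{N(y)}|$, so that the bound $t-3$ improves for vertices $y$ that already contribute many non-edges inside $Y$; or choose $v$ so as to maximise $|Y|$ and appeal to Theorem~\ref{lem2} to further constrain each $\overline{N(y)}$. Either refinement should continue to yield some $u\in N(v)$ with at most $3$ non-neighbors in $Y$ across the whole range $t\le 17$, completing the induction.
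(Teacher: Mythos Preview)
The paper does not supply a proof of Theorem~\ref{17}; the result is quoted from~\cite{ZA} and then used as a black box (notably in the proof of Lemma~\ref{lem-3sets}). So there is no in-paper argument to compare your proposal against. Your outline is indeed the same inductive scheme the paper itself re-uses, with the stronger bound of Theorem~\ref{thm1} in place of $|\overline{N(v)}|\le t-2$, to push the range to $t\le 22$ in Theorem~\ref{thm3}. One correction on your bookkeeping: to obtain some $u\in N(v)$ with at most $3$ non-neighbours in $Y$ you only need the average to be strictly below~$4$, not at most~$3$. Using $|N(v)|=4t-2-|Y|$ and the per-vertex bound $t-3$, the requirement becomes $|Y|(t+1)<16t-8$, which holds for every $|Y|\le t-2$ precisely when $t\le 16$. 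Thus your ``crude'' step already reaches $t\le 16$, and the only outstanding case is $t=17$ with $|Y|=t-2=15$.

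For that single remaining case both of your proposed refinements fail. Refinement~1 is vacuous: since $\alpha(G)=2$, the set $Y=\overline{N(v)}$ is a clique, so there are no non-edges internal to $Y$ to feed back into $|\overline{N(y)}|$; each $y\in Y$ still has up to $t-3$ non-neighbours in $N(v)$. Refinement~2, choosing $v$ with $|\overline{N(v)}|$ maximal, yields $|\overline{N(y)}|\le |Y|$ for $y\in Y$, but in the bad case $|Y|=t-2$ this is exactly the bound already in hand, so nothing is gained. Hence $t=17$ remains a genuine gap in your proposal; closing it requires an additional idea beyond the plain averaging, and you cannot borrow Theorem~\ref{thm1} for this purpose without circularity, since its proof in the paper passes through Lemma~\ref{lem-3sets}, which in turn invokes Theorem~\ref{17}.
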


\begin{lem}\label{lem-3sets}
Assume that $\cm(G)\le t-1$. Let $(S_1, S_2, S_3)$ be a partition of $V(G)$  {\rm(}some $S_i$ maybe empty{\rm)} such that $G[S_1]$ and $G[S_2]$ are anti-complete cliques, and such that 
$S_2$ is complete to $S_3$. Then $|S_1| + |S_2| \leq t - 4$.
\end{lem}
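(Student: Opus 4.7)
I would proceed by contradiction: suppose $s_1+s_2\ge t-3$, writing $s_i:=|S_i|$; Theorem~\ref{lem2} with $S_0=\emptyset$ already gives $s_1+s_2\le t-1$, hence $s_3\ge 3t$. Set $T:=\{v\in S_3:N(v)\supseteq S_1\}$. Because $S_3$ is complete to $S_2$, every subset of $T$ is automatically complete to $S_2$ as well, so any triangle $\sigma\subseteq T$ is a $3$-clique complete to $S_1\cup S_2$; applying Theorem~\ref{lem2} with $(S_0,S_1,S_2)\leftarrow(\sigma,S_1,S_2)$ then gives $3+s_1+s_2\le t-1$, the desired contradiction. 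Since $\alpha(G[T])\le 2$ and $R(3,3)=6$, it suffices to produce a triangle in $T$, which reduces to showing $|T|\ge 6$.

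To produce such a triangle I would pick any triangle $\tau\subseteq S_3$ (which exists because $|S_3|\ge 6$ and $\alpha(G[S_3])\le 2$) and apply Lemma~\ref{lem-triangle} to get $|\bigcap_{v\in\tau}N(v)|\ge t+2$. Since $S_2\subseteq\bigcap N(v)$, the sets $W_1:=\bigcap N(v)\cap S_1$ and $W_3:=\bigcap N(v)\cap(S_3\setminus\tau)$ satisfy $|W_1|+|W_3|\ge t+2-s_2$. If $W_1=S_1$, then $\tau\subseteq T$ and we are done. Otherwise $|W_1|\le s_1-1$, so $|W_3|\ge t+3-s_1-s_2\ge 6$ in the boundary case $s_1+s_2=t-3$. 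Ramsey applied inside $G[W_3]$ then gives a triangle $\sigma\subseteq W_3$, and $K:=\tau\cup\sigma$ is a $6$-clique inside $S_3$ (with $\sigma$ adjacent to $\tau$ by construction).

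The hard part will be closing the argument when this $6$-clique $K$ does not lie entirely inside $T$, i.e.\ some vertex of $K$ has a non-neighbour in $S_1$. Setting $S_1':=\{v_1\in S_1:v_1\text{ is adjacent to every vertex of }K\}$, Theorem~\ref{lem2} applied with $(K,S_1',S_2)$ gives $|S_1'|\le t-7-s_2$, forcing $|S_1\setminus S_1'|\ge 4$. I expect the finish to be a direct construction of a generalized connected matching of size $\ge t$ (contradicting Lemma~\ref{k1+k2<m}), using $K\cup S_2$ as a clique of singletons of size $6+s_2$ together with a matching of $S_1$ into $S_3\setminus K$ obtained via Lemma~\ref{lem-matching}, where each ``bad'' $v_1\in S_1\setminus S_1'$ is paired with some $z_{v_1}\in S_3\setminus K$ adjacent to every vertex of $K$ that $v_1$ misses --- a choice made possible because $\alpha(G)=2$ keeps the relevant joint non-neighbourhoods small. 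The sub-cases $s_1+s_2\in\{t-2,t-1\}$ need only two or one extra items respectively, and are handled in the same spirit with correspondingly relaxed lower bounds on $|W_3|$.
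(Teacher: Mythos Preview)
Your opening observation is clean and correct: a triangle $\sigma\subseteq T:=\{v\in S_3:S_1\subseteq N(v)\}$ would finish immediately via Theorem~\ref{lem2}. But you never actually produce such a triangle. The $6$-clique $K=\tau\cup\sigma$ you build need not lie in $T$, and in the sub-cases $s_1+s_2\in\{t-2,t-1\}$ your lower bound on $|W_3|$ drops to $5$ or $4$, so Ramsey no longer yields $\sigma$ at all; ``handled in the same spirit'' does not cover this.

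The real gap is the ``expected finish.'' Your bound $|S_1'|\le t-7-s_2$ from Theorem~\ref{lem2} points the wrong way: it forces \emph{many} bad vertices $v_1\in S_1\setminus S_1'$, making the matching harder, and you give no mechanism for pairing them. For a bad $v_1$ with $K_{v_1}:=\{k\in K:v_1\not\sim k\}$ of size $m$, a union bound over $\overline{N(v_1)}$ and the $\overline{N(k)}$'s shows that the number of admissible partners in $S_3\setminus K$ is at least $3t-5-s_1-m(t-4)$; once $m\ge 3$ and $s_1\ge 8$ this is negative, so ``$\alpha(G)=2$ keeps joint non-neighbourhoods small'' is not enough. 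Something genuinely new is needed here, and your sketch does not supply it.

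The paper takes a different route. It works with a single triangle $xyz\subseteq S_3$ and first proves a key upper bound (its Claim): the common $S_3$-neighbourhood of \emph{any} triangle in $G[S_3]$ has size at most $t-2$ (otherwise Lemma~\ref{lem-matching} would match all of $S_1$ into it, and together with $S_2\cup\{x,y,z\}$ give a generalized connected matching of size $\ge t$). With $A:=\bigcap N(v_i)\cap S_3$ thus two-sidedly bounded ($a+3\le|A|\le t-2$, where $a=|S_1\setminus S_1''|$), the paper shows the bipartite graph between $S_1\setminus S_1''$ and $A$ has no matching saturating $S_1\setminus S_1''$, takes a minimum K\"onig cover $R=B\cup D$, and assembles the final matching in two pieces: $B$ into the clique $A\setminus D$, and $(S_1\setminus B)\cup\{x,y,z\}$ into the set $L\subseteq S_3$ of vertices complete to $S_1'\setminus B$, where $|L|\ge t+4$ follows from a \emph{second} application of the Claim (to a triangle inside $A\setminus D$). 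The K\"onig step and this double use of the triangle-neighbourhood bound are exactly the structural inputs your sketch is missing.
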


\begin{proof}
By Theorem \ref{17}, we may assume that $t \geq 18$.
Set $s_i:=|S_i|$ for each integer $1\leq i\leq 3$. Suppose to the contrary that $s_1 + s_2 \geq t - 3$.
Since Theorem \ref{lem2} implies that $s_1 + s_2 \leq t - 1$, we have $s_3\geq 3t \geq 54$, so $G[S_3]$ contains a triangle $T$ by Ramsey theorem.
Since $G[S_2 \cup V(T)]$ is a clique and $\omega(G)\leq t-1$ by Lemma \ref{k1+k2<m}, we have $s_2\leq t-4$, so $S_1\neq \emptyset$.

\begin{cla}\label{claim4}
For any triangle $v_1v_2v_3v_1$ in $G[S_3]$, we have $|\bigcap\limits_{i=1}^3 N_{G[S_3]}(v_i)|\leq t-2$.
\end{cla}
\begin{proof}
Assume to the contrary that $|\bigcap\limits_{i=1}^3 N_{G[S_3]}(v_i)|\geq t-1$. Since $s_3\geq 3t$, by Lemma \ref{lem-matching}, the bipartite subgraph of $G$ induced by the edges between $S_1$ and $\bigcap\limits_{i=1}^3 N_{G[S_3]}(v_i)$ contains a matching $M$ of $G$ with size $s_1$. Then $M\cup S_2 \cup \{v_1,v_2,v_3\}$ is a generalized connected matching of size at least $t$ as $s_1 + s_2 \geq t - 3$, a contradiction to Lemma \ref{k1+k2<m}.
\end{proof}

Let $xyzx$ be a triangle of $G[S_3]$. Set
\begin{eqnarray*}
S''_1:= &\{v\in S_1:\ v\in N(x)\cap N(y)\cap N(z)\},\\
A:=&\{v\in S_3:\ v\in N(x)\cap N(y)\cap N(z)\}.
\end{eqnarray*}
Set $S'_1:=S_1\backslash S''_1$ and $a:=|S'_1|$. Then $|S''_1|=s_1-a$.
By Lemma \ref{lem-triangle}, $|A|+(s_1-a)+s_2\geq t+2$. Moreover, since $s_1+s_2\leq t-1$, we have $|A|\geq a+3$. Combined with Claim \ref{claim4}, we have {\bf (a)} $ a+3\leq |A|\leq t-2$.

Let $H_1$ be the bipartite subgraph of $G$ induced by the edges between $S'_1$ and $A$.
We claim that $H_1$ contains no matching of size $a$. Suppose to the contrary that $M_1$ is a matching of $H_1$ with size $a$. Since $$|S_3 \backslash (V(M_1) \cup \{x,y,z\})|\geq 3t-(a+3)>t-1,$$ the bipartite subgraph of $G$ induced by the edges between $S_3\backslash (V(M_1) \cup \{x,y,z\})$ and $S''_1$ contains a matching $M_2$ of size $s_1-a$ by Lemma \ref{lem-matching}. Then $M_1\cup M_2\cup\{x,y,z\} \cup S_2$ is a generalized connected matching of size at least $t$, a contradiction to Lemma \ref{k1+k2<m}. Thus, the claim holds.

By (a), K\"{o}nig's theorem and the claim proved in the last paragraph,
there is  a set $R \subseteq V(H_1)$ with $|R|\leq a-1$ meeting all edges of $H_1$. Without loss of generality we may further assume that $R$ is chosen with $|R|$ as small as possible. Set $B:= S'_1 \cap R$ and $D: =A \cap R$. Then $S'_1\backslash B$ is anti-complete to $A \backslash D$. 
Since $|R|\leq a-1$ and $a=|S'_1|$, we have {\bf (b)} $S'_1\backslash B$ is non-empty, so $G[A \backslash D]$ is a clique as  $\alpha(G)=2$. Moreover, since $|A|\geq a+3$ by (a), we have {\bf (c)} $|B|<|A \backslash D|\geq 4$.

Let $H_2$ be the bipartite subgraph of $G$ induced by the edges between $B$ and $A \backslash D$.
We claim that $H_2$ has a matching $M$ of size $|B|$. Assume not, implying that $B$ is non-empty. 
Then by (c) and K\"{o}nig's theorem again, there exists a set $R' \subseteq B\cup (A\backslash D)$ with $|R'|\leq |B|-1$ meeting all edges of $H_2$. Moreover, since $R$ meets all edges of $H_1$, so is $D \cup R'$, a contradiction to the choice of $R$ as $|D \cup R'| < |R|$. Thus, the claim holds. 

Set $$F:=\{v\in S_3\backslash (A\cup\{x,y,z\}):\ v\ \text{has a non-neighbour in}\ S'_1\backslash B\}.$$ 
Since $S'_1\backslash B$ is non-empty and anti-complete to $A \backslash D$ by (b), the graph $G[A \backslash D]$ is a clique and complete to $F$. Since $G[A \backslash D]$ contains a triangle by (c), it follows from Claim \ref{claim4} that $|F|\leq t-5$ as $A$ is also complete to $\{x,y,z\}$. 
Let $L$ be the subset of $S_3\backslash (A\cup\{x,y,z\})$ consisting of vertices that are complete to $S'_1\backslash B$. Then $(L, A, F, \{x,y,z\})$ is a partition of $S_3$. Since $s_1+s_2\leq t-1$, $|F|\leq t-5$ and $|A|\leq t-2$ by (a), we have $|L|\geq t+4$. Hence, by Lemma \ref{lem-matching}, the bipartite subgraph induced by the edges between $L$ and $(S_1\backslash B)\cup \{x,y,z\}$ contains a matching $M'$ of size $|S_1|-|B|+3$. Then $M\cup M'\cup S_2$ is a generalized connected matching  of size at least $t$, a contradiction to Lemma \ref{k1+k2<m}.
\end{proof}

\begin{cor}\label{omega}
\textit{If $\cm(G)\le t-1$, then $\omega(G)\leq t-4$.}
\end{cor}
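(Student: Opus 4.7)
The approach is to specialize Lemma~\ref{lem-3sets} to the case $S_2=\emptyset$. Concretely, I would let $K$ be a clique of $G$ with $|K|=\omega(G)$ and form the partition of $V(G)$ given by $S_1:=K$, $S_2:=\emptyset$, and $S_3:=V(G)\setminus K$. The next step is to verify the hypotheses of Lemma~\ref{lem-3sets} on this partition: $G[S_1]$ is a clique by the choice of $K$; and, since $S_2$ is empty, the requirements that $G[S_2]$ be a clique, that $S_1$ be anti-complete to $S_2$, and that $S_2$ be complete to $S_3$ all hold vacuously. Finally I would invoke Lemma~\ref{lem-3sets} to conclude $\omega(G)=|S_1|+|S_2|\le t-4$.

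I do not foresee any real obstacle. The ``some $S_i$ maybe empty'' provision in Lemma~\ref{lem-3sets} was included precisely to legitimize the choice $S_2=\emptyset$, so the corollary is essentially the reading of that lemma in this degenerate case. Its effect is to sharpen the bound $\omega(G)\le t-1$ (an immediate consequence of Lemma~\ref{k1+k2<m}, since any clique constitutes a generalized connected matching) to $\omega(G)\le t-4$.
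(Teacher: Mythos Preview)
Your proposal is correct and matches the paper's own proof essentially verbatim: the paper also takes $S_1$ to be a maximum clique, sets $S_2:=\emptyset$ and $S_3:=V(G)\setminus S_1$, and invokes Lemma~\ref{lem-3sets}.
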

\begin{proof}
Let $S_1$ be a $\omega(G)$-clique of $G$. Set $S_2:=\emptyset$ and $S_3:=V(G)-V(S_1)$. Then $\omega(G)\leq t-4$ follows from Lemma \ref{lem-3sets}.
\end{proof}

A connected matching $M$ of a graph $H$ is \emph{dominating} if each edge in $M$ is adjacent to all vertices in $V(H)\backslash V(M)$. 

\begin{lem}\label{lem1}
Let $G$ be a minimal counterexample to Conjecture {\rm\ref{conjFGS}}. Then $G$ contains no dominating matching.
\end{lem}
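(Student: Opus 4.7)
The plan is to suppose for contradiction that $G$ has a dominating matching $M$ with $|M|=k$, and to concatenate $M$ with a connected matching obtained inside $V(G)\setminus V(M)$ via minimality, producing a connected matching in $G$ of size at least $t$, contradicting $\cm(G)\leq t-1$. By Lemma \ref{k1+k2<m} the matching $M$ itself satisfies $k\leq t-1$, and by Theorem \ref{17} we may assume $t\geq 18$.

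Concretely, I would set $t':=t-\lceil k/2\rceil$, which satisfies $1\leq t'<t$ together with $4t'-1\leq 4t-1-2k=|V(G)\setminus V(M)|$. Choosing any $U'\subseteq V(G)\setminus V(M)$ with $|U'|=4t'-1$ yields an induced subgraph with independence number at most $2$, so by minimality of $G$ as a counterexample (noting that if $\alpha(G[U'])=1$ then $G[U']$ is a clique and trivially contains a connected matching of size at least $t'$) I obtain a connected matching $M'$ in $G[U']$ of size $t'$.

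The remaining step is to verify that $M\cup M'$ is a connected matching in $G$. Disjointness is immediate from $V(M')\subseteq U'\subseteq V(G)\setminus V(M)$; connectedness inside $M$ (resp.\ inside $M'$) is given; and the cross-connectivity follows at once from the dominating property of $M$, which forces each edge of $M$ to be adjacent to every vertex of $V(M')$, hence in particular to every edge of $M'$. The resulting connected matching has size $k+t'=t+\lfloor k/2\rfloor\geq t$, contradicting $\cm(G)\leq t-1$. There is no genuine obstacle here; the only point requiring any care is calibrating $t'$ so that both $4t'-1\leq|V(G)\setminus V(M)|$ and $k+t'\geq t$ hold simultaneously, which the choice $t'=t-\lceil k/2\rceil$ arranges even in the borderline case $k=1$, where both inequalities become equalities.
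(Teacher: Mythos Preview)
Your proof is correct and follows essentially the same approach as the paper: assume a dominating matching $M$ of size $k$, use minimality of $G$ to extract a connected matching $M'$ inside $V(G)\setminus V(M)$, and combine the two via the dominating property. The only difference is the calibration of $t'$: the paper simply observes $|V(G)\setminus V(M)|=4t-1-2k>4(t-k)$ and takes $t'=t-k$, yielding $|M\cup M'|=t$ exactly, whereas you take the maximal feasible $t'=t-\lceil k/2\rceil$, yielding $|M\cup M'|=t+\lfloor k/2\rfloor$; both work, and the paper's choice is slightly simpler. (One small exposition slip: in the case $k=1$ only the inequality $k+t'\ge t$ becomes an equality, since $4t'-1=4t-5<4t-3=|V(G)\setminus V(M)|$; this does not affect the argument. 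The appeal to Theorem~\ref{17} is also unnecessary here, since $t'\ge\lceil t/2\rceil\ge 1$ already follows from $k\le t-1$.)
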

\begin{proof}
Assume to the contrary that $G$ contains a dominating matching $M$ of size $k$ for some positive integer $k$. Since $M$ is connected and $G$ is a counterexample to Conjecture {\rm\ref{conjFGS}}, we have $1\leq k<t$. Moreover, since $|V(G) \backslash V(M)|>4(t-k)$, by the minimality of $G$, the graph $G\backslash V(M)$ has a connected $(t-k)$-matching $M'$. Then $M\cup M'$ is a connected $t$-matching in $G$ by the definition of dominating matchings, a contradiction.
\end{proof}

Now we prove our main result Theorem \ref{thm1}. For convenience, we restate it here.
\begin{thr}
Let $G$ be a minimal counterexample to Conjecture {\rm\ref{conjFGS}}. 
For any disjoint subsets $S_1, S_2\subseteq V(G)$ {\rm(}some $S_i$ maybe empty{\rm)}, if $G[S_1]$ and $G[S_2]$ are anti-complete cliques, then $|S_1|+|S_2|\leq t-4$.
\end{thr}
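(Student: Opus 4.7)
The plan is to argue by contradiction: suppose $|S_1|+|S_2|\ge t-3$. First, dispose of the trivial case: if either $S_1$ or $S_2$ is empty, then the remaining $S_i$ is a clique, so Corollary \ref{omega} immediately gives $|S_1|+|S_2|=|S_i|\le \omega(G)\le t-4$. So assume both are non-empty. Theorem \ref{lem2} (with $S_0=\emptyset$) already yields $|S_1|+|S_2|\le t-1$, and writing $S_3:=V(G)\setminus(S_1\cup S_2)$ we have $|S_3|\ge 3t$. The task is therefore to gain three further units.

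Next, I would mimic the partition used inside the proof of Theorem \ref{lem2}: let $C_1$ (resp.\ $C_2$) be the set of vertices of $S_3$ with a non-neighbour in $S_2$ (resp.\ $S_1$), and set $C_0:=S_3\setminus(C_1\cup C_2)$. The hypothesis $\alpha(G)=2$ forces $C_1\cap C_2=\emptyset$, $C_0$ complete to $S_1\cup S_2$, $C_1$ complete to $S_1$, and $C_2$ complete to $S_2$. If $C_1=\emptyset$, then $S_2$ is complete to $S_3$, so Lemma \ref{lem-3sets} applied to the partition $(S_1,S_2,S_3)$ directly yields the contradiction $|S_1|+|S_2|\le t-4$; a symmetric argument handles $C_2=\emptyset$. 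The remaining case is $C_1,C_2\ne\emptyset$, and it is here that the hypothesis that $G$ is a \emph{minimal} counterexample---specifically Lemma \ref{lem1}---must come into play, since Lemma \ref{lem-3sets} is not directly applicable.

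For this main case I would imitate the internal strategy of the proof of Lemma \ref{lem-3sets}. Since $|S_3|\ge 3t$, Ramsey's theorem produces a triangle $xyz$ in $S_3$, and Lemma \ref{lem-triangle} guarantees that its common neighbourhood is large. Using K\"onig's theorem together with Lemma \ref{lem-matching}, I plan to set up matchings between $S_1$, $S_2$ and these common neighbours, aiming to assemble a generalized connected matching of size at least $t$ and so contradict Lemma \ref{k1+k2<m}. The main obstacle is that, because $C_1$ and $C_2$ are non-empty, $S_2$ is not complete to $S_3$, so the matching-extension step used in Lemma \ref{lem-3sets} no longer applies verbatim: the triangle $xyz$ should probably be chosen inside $C_0$ so that $\{x,y,z\}$ is complete to $S_1\cup S_2$, and the matching partners must be selected so as simultaneously to dominate the troublesome vertices of $C_1\cup C_2$. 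When the generalized-matching construction falls just short of size $t$, the partial matching that has been built should turn out to be a dominating connected matching of $G$, contradicting Lemma \ref{lem1}. It is this interplay between the two forbidden objects---a generalized connected matching of size $t$ (ruled out by Lemma \ref{k1+k2<m}) and a dominating connected matching (ruled out by Lemma \ref{lem1})---that constitutes the key new content beyond Lemma \ref{lem-3sets}.
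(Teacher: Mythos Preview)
Your outline begins correctly---the reduction to both $S_i$ non-empty via Corollary~\ref{omega}, the bound $|S_1|+|S_2|\le t-1$ from Theorem~\ref{lem2}, the partition $C_0,C_1,C_2$, and the use of Lemma~\ref{lem-3sets} to dispose of the cases $C_1=\emptyset$ or $C_2=\emptyset$ all match the paper exactly. But in the main case your plan diverges from the paper's argument and runs into real trouble.

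First, your suggestion to choose the triangle inside $C_0$ is not viable: nothing prevents $C_0$ from being empty (indeed, the paper never uses $C_0$ except to note it is complete to $S_1\cup S_2$). The paper instead shows that $C_1$ and $C_2$ are \emph{large}---specifically $|C_i|>3s_{3-i}$---and then finds the triangle inside $C_1$ (say) by Ramsey, matching its three vertices across to $C_2$. Combined with full $s_2$- and $s_1$-matchings from $S_2$ into $C_1$ and from $S_1$ into $C_2$, this yields a connected matching of size $s_1+s_2+3\ge t$.

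Second, you identify Lemma~\ref{lem1} as the sole place where minimality enters, but the paper uses minimality more strongly. To prove $|C_i|>3s_{3-i}$, the paper assumes $|C_1|\le 3s_2$, takes the $s_2$-matching $M_1$ between $S_2$ and $C_1$, and then \emph{applies the conjecture inductively} to $G\setminus(V(M_1)\cup C_1)$, which has at least $4(t-s_2)-1$ vertices, to obtain a connected $(t-s_2)$-matching $M_2$; since $M_1$ dominates everything outside $C_1$, the union $M_1\cup M_2$ is a connected $t$-matching. Lemma~\ref{lem1} is used only in the preliminary step showing $|C_i|>s_{3-i}$. Finally, to guarantee the full-size matchings between $S_{3-i}$ and $C_i$ via K\"onig's theorem, the paper first replaces $S_1,S_2$ by a pair with $|S_1|+|S_2|$ \emph{maximal}; without this extremal choice the K\"onig argument does not close. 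Your sketch omits both of these ingredients, and the Lemma~\ref{lem-3sets} template you propose to follow does not supply them.
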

\begin{proof}
Set $s_i:=|S_i|$ for each $1\leq i\leq2$. 
Assume to the contrary that $s_1+s_2\geq t-3$. By Corollary \ref{omega}, {\bf (a)} neither $S_1$ nor $S_2$ is empty. Moreover, by Theorem \ref{lem2}, we have $t-1\geq s_1+s_2\geq t-3$.
Without loss of generality, we may further assume that $S_1$ and $S_2$ are chosen with $s_1+s_2$ as maximal as possible.
Set
\begin{eqnarray*}
 C_1&:=& \{x\in V(G)\backslash (S_1\cup S_2):\  x\ \text{has a non-neighbour in}\ S_2\}, \\
 C_2&:=& \{x\in V(G)\backslash (S_1\cup S_2):\  x\ \text{has a non-neighbour in}\ S_1\}, \\
C_0&:=& V(G)\backslash (S_1\cup S_2 \cup C_1 \cup C_2).
\end{eqnarray*}
Then {\bf (b)} $C_0$ is complete to $S_1 \cup S_2$ by definition. Since $\alpha(G)=2$ and $S_1$ is anti-complete to $S_2$, we have that {\bf (c)} $C_1$ is complete to $S_1$ and $C_2$ is complete to $S_2$ by (a). Hence, it follows from Lemma \ref{lem-3sets} that neither $C_1$ nor $C_2$ is empty.


For each $1\leq i\leq2$, let $H_i$ be the bipartite subgraph of $G$ induced by the edges between $C_i$ and $S_{3-i}$. 
\begin{cla}\label{matching}
For each $1\leq i\leq2$, the graph $H_i$ contains a connected $s_{3-i}$-matching. 
\end{cla}
\begin{proof}
By symmetry, it suffices to show that Claim \ref{matching} holds for $H_1$. We claim that $H_1$ contains a matching $M$ of size $\m\{s_2,|C_1|\}$. Assume not. Then there exists $R \subseteq S_2\cup C_1$ with $|R|<\m\{s_2,|C_1|\}$ meeting all edges of $H_1$ by K\"{o}nig's theorem. Since $S_1 \cup (C_1 \backslash R)$ is anti-complete to $S_2 \backslash R$, we get a contradiction to the choice of $S_1$ and $S_2$ as $|S_1 \cup (C_1 \backslash R)|+|S_2 \backslash R|>|S_1 \cup S_2|$. Hence, the claim holds.

Since $S_2$ induces a clique in $G$, the matching $M$ is connected. When $|C_1|\leq s_2$, the set $M$ is a dominating matching of $G$ by (b) and (c), which is not possible by Lemma \ref{lem1}. So $|C_1|> s_2$. Equivalently, $M$ is a connected $s_2$-matching. 
\end{proof}

\begin{cla}\label{size}
For each $1\leq i\leq2$, we have $|C_i|>3s_{3-i}$. 
\end{cla}
\begin{proof}
Assume not. By symmetry, we may assume that $|C_1|\leq 3s_2$. Let $M_1$ be a connected $s_2$-matching of $H_1$. 
Claim \ref{matching} implies that such $M_1$ exists. 
Since $|V(G)|-|V(M_1) \cup C_1|\geq 4(t-s_2)-1$, the graph $G\backslash (V(M_1) \cup C_1)$ contains a connected $(t-s_2)$-matching $M_2$ as $G$ is a minimal counterexample to  Conjecture {\rm\ref{conjFGS}}. Moreover, since $M_1$ is a dominating matching of $G\backslash (C_1\backslash V(M_1))$  by (b) and (c), the matching $M_1\cup M_2$ is a connected $t$-matching in $G$, a contradiction. So $|C_1|> 3s_2$.
\end{proof}

By Theorem \ref{17}, we have that $t\geq18$. By (a) and Claim \ref{size}, we have $|C_1|,|C_2|>3$. Moreover, since $s_1+s_2\geq t-3\geq15$, by symmetry and Claim \ref{size} we may assume that $|C_1|\geq 6$. By Ramsey theorem, $G[C_1]$ contains a triangle $T$. Let $H_0$ be the bipartite subgraph of $G$ induced by the edges between $V(T)$ and $C_2$. If $H_0$ does not contain a 3-matching, then by K\"{o}nig's theorem and the fact that $\alpha(G)=2$, there exists a clique $K$ in $G[C_2]$ with $|V(K)|\geq|C_2|-2\geq3s_1-1$. Then $G[S_2\cup V(K)]$ is a clique of size larger than $t-3$, a contradiction to Corollary \ref{omega}. Hence, $H_0$ contains a connected 3-matching $M$ as $T$ is a triangle.

For each $1\leq i\leq2$, since $|C_i \backslash V(M)|> s_{3-i}$ by Claim \ref{size}, following a similar way as the proof of Claim \ref{matching}, $H_i\backslash V(M)$ contains a connected $s_{3-i}$-matching $M_i$. Hence, $M\cup M_1\cup M_2$ is a connected matching of size at least $t$  by (b) and (c), which is a contradiction.
\end{proof}

Using Theorem \ref{thm1} and following a similar way as the proof of Lemma \ref{lem-matching}, we can prove
\begin{cor}\label{}
\textit{Assume that $\cm(G)\le t-1$. For any disjoint subsets $A, B \subseteq V(G)$ with $|A|\leq |B|=t-4$, the bipartite subgraph of $G$ induced by the edges between $A$ and $B$ contains a matching of size $|A|$.}
\end{cor}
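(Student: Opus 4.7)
The plan is to imitate the proof of Lemma \ref{lem-matching} verbatim, with the single change of invoking Theorem \ref{thm1} in place of Theorem \ref{lem2}. The upgrade from a matching-existence bound at $|B|=t-1$ to one at $|B|=t-4$ is exactly the gain one expects from sharpening the anti-complete cliques bound from $t-1$ to $t-4$.

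Concretely, let $H$ denote the bipartite subgraph of $G$ induced by the edges between $A$ and $B$, and suppose for contradiction that $H$ contains no matching of size $|A|$. By K\"onig's theorem there is a set $R\subseteq A\cup B$ with $|R|\leq |A|-1$ meeting every edge of $H$. Then $A\setminus R$ is anti-complete to $B\setminus R$ in $G$, and since $\alpha(G)=2$, both $G[A\setminus R]$ and $G[B\setminus R]$ are cliques. Counting, $|A\setminus R|+|B\setminus R|\geq |A|+|B|-|R|\geq |B|+1=t-3$, which contradicts Theorem \ref{thm1} (with $S_1=A\setminus R$ and $S_2=B\setminus R$).

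The only step that deserves a second look is the applicability of Theorem \ref{thm1}, which is stated for a \emph{minimal} counterexample to Conjecture \ref{conjFGS}, whereas the corollary is stated only under $\cm(G)\le t-1$. In practice this causes no issue: one invokes the corollary at a minimum counterexample, or equivalently replaces the hypothesis with ``$G$ is a minimal counterexample''. Once that is recognized, no further obstacle arises; the proof is a direct K\"onig-theorem argument, and the $t-4$ threshold is precisely tuned so that the set cover produced by K\"onig leaves behind anti-complete cliques large enough to contradict Theorem \ref{thm1}.
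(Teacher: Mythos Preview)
Your proof is correct and is exactly the argument the paper has in mind: the paper does not spell out a proof but simply says to use Theorem~\ref{thm1} ``following a similar way as the proof of Lemma~\ref{lem-matching},'' which is precisely the K\"onig's theorem computation you wrote out. Your observation about the mismatch between the hypothesis $\cm(G)\le t-1$ in the corollary and the ``minimal counterexample'' hypothesis in Theorem~\ref{thm1} is well taken; the paper is slightly loose here, and the corollary should really be read under the stronger minimality assumption.
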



\end{document}